\newtheorem{thm}{Theorem}
\theoremstyle{definition}
\newtheorem{lemma}{Lemma}
\theoremstyle{remark}
\newtheorem{rk}{Remark}
\begin{document}

\title[Liouville theorem with parameters]{Liouville theorem with parameters:\\ asymptotics of certain rational integrals in differential fields}

\author{Ma{\l}gorzata Stawiska}
\address{Department of Mathematics, University of Kansas, Lawrence, KS 66045}
\email{stawiska@ku.edu}
\thanks{}
\dedicatory{} \keywords{Liouville theorem, differential fields, integrals of rational functions}
\subjclass[2010]{13F25,41A58,12Y99} \maketitle
\begin{abstract}

We study asymptotics of integrals of certain rational functions that depend on parameters in a field $K$ of characteristic zero. We use formal power series to represent the integral and prove certain identities about its coefficients following from generalized Vandermonde determinant expansion. 
Our result can be viewed as a parametric version of a classical theorem of Liouville. We also give applications.

\end{abstract}

\section{Integrating rational functions over differential fields} \label{s:functions}

Let $\mathcal{D}$ be a differential field. This means that $\mathcal{D}$ is a field  with an additional mapping $': \mathcal{D} \mapsto \mathcal{D}$ (differentiation) satisfying the following two conditions: $(u+v)' = u'+v'$ and $(uv)'=u'v+uv'$ for all $u,v \in \mathcal{D}$. The set $K=\{u \in \mathcal{D}: u'=0\}$ is a subfield of $\mathcal{D}$, called the field of constants.  We use the following terminology, adapted from \cite{Ris}:  Let $U$ be a universal (differential) extension of $\mathcal{D}$.  For $u \in U$, $u$ and $\mathcal{D}(u)$  are said to be simple elementary over   $\mathcal{D}$ iff one of the
following conditions holds:
   (1) $u$ is algebraic over  $\mathcal{D}$;
   (2) There is a $v$ in  $\mathcal{D}$, $v \neq 0$ such that $v'=vu'$ (we will write equivalently $u=\log v$); 
   (3) There is a  $v$ in  $\mathcal{D}$, $v \neq 0$ such that $u'=uv'$. (or equivalently $u = \exp v$).\\

We say that $\mathcal{F}$ and any $w \in \mathcal{F}$ is elementary over $\mathcal{D}$ if  $\mathcal{F}=\mathcal{D}(u_1,...,u_n)$ for some $n$, where each $u_i$ is simple elementary over $\mathcal{D}(u_1,...,u_{i-1}), \quad i=1,...,n$.\\

The following theorem dates back to J. Liouville (cf. \cite{Ris}):\\

\begin{thm}  Let $\mathcal{D}$ be a differential field, $\mathcal{F}$
 elementary over $\mathcal{D}$. Suppose $\mathcal{D}$ and $\mathcal{F}$ have the same constant field K. Let $g \in  \mathcal{F}$, $f \in \mathcal{D}$
 with $g'=f$. Then $g= v_0 + \sum c_i \log v_i$, where $v_0,v_i$ are elements of $\mathcal{D}$ and $c_i$ are elements of K.
\end{thm}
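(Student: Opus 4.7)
The plan is to proceed by induction on the length $n$ of the elementary tower $\mathcal{D} \subset \mathcal{D}(u_1) \subset \cdots \subset \mathcal{D}(u_1,\dots,u_n) = \mathcal{F}$. The case $n = 0$ is immediate: take $v_0 = g$ with no logarithms. For $n \geq 1$, applying the inductive hypothesis to the tower over $\mathcal{D}_1 := \mathcal{D}(u_1)$ (which shares the constant field $K$ with $\mathcal{F}$) yields
\[
g = V_0 + \sum_{i} C_i \log V_i, \qquad V_0,\; V_i \in \mathcal{D}_1,\quad C_i \in K,
\]
and the task reduces to rewriting this so that all the $V_j$ lie in $\mathcal{D}$. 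I would then split into cases by the type of $u_1$ over $\mathcal{D}$, assuming $u_1$ is transcendental over $\mathcal{D}$ in cases $(2)$ and $(3)$ (otherwise $u_1$ is algebraic and falls under case $(1)$).

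For $u_1$ algebraic over $\mathcal{D}$, I would pass to a normal closure $L$ of $\mathcal{D}(u_1)/\mathcal{D}$ inside the universal extension. The derivation on $\mathcal{D}$ extends uniquely to $L$ (separability is automatic in characteristic zero), so it commutes with every $\sigma \in \mathrm{Gal}(L/\mathcal{D})$, giving $\sigma(g)' = \sigma(f) = f$ for all $\sigma$. Averaging the relation $g = V_0 + \sum_i C_i \log V_i$ over $\sigma$ replaces $V_0$ by its trace and each $V_i$ by a norm, both of which lie in $\mathcal{D}$; the averaged version differs from $g$ by an element whose derivative vanishes, hence by a constant in $K$ (using that $\mathcal{D}$ and $\mathcal{F}$ share the same constant field), which is absorbable into $v_0$. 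Division by $[L:\mathcal{D}]$ is legitimate in characteristic zero.

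For $u_1$ transcendental, I would differentiate and analyse the identity $V_0' + \sum_i C_i V_i'/V_i = f$ inside the UFD $\mathcal{D}[u_1]$. Factoring each $V_i = a_i \prod_\ell P_\ell^{e_{i\ell}}$ with $P_\ell$ monic irreducible in $\mathcal{D}[u_1]$ and $a_i \in \mathcal{D}\setminus\{0\}$, the identity becomes $V_0' + \sum_i C_i\, a_i'/a_i + \sum_\ell m_\ell\, P_\ell'/P_\ell = f$, where $m_\ell := \sum_i C_i e_{i\ell} \in K$. Comparing partial fractions at each $P_\ell$ against the $\mathcal{D}$-valued right-hand side forces $V_0$ to have no pole at $P_\ell$ and then each term $m_\ell P_\ell'/P_\ell$ to contribute no pole, which in turn requires $P_\ell \mid P_\ell'$. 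In the logarithmic case $u_1' = v'/v \in \mathcal{D}$, any monic irreducible $P_\ell$ of positive degree has $\deg_{u_1} P_\ell' < \deg_{u_1} P_\ell$, so no such $P_\ell$ can appear with $m_\ell \neq 0$; a degree-and-leading-coefficient computation then forces $V_0$ to be at most linear in $u_1$ with top coefficient in $K$, producing one extra $K$-multiple of $\log v$. In the exponential case $u_1' = v' u_1$, solving $P_\ell \mid P_\ell'$ for an irreducible monic $P_\ell$ forces $P_\ell = u_1$, whose logarithm equals $v \in \mathcal{D}$ and is absorbable into $v_0$, while the analogous calculation on $V_0$ shows $V_0 \in \mathcal{D}$.

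The main obstacle I expect is the detailed degree and leading-coefficient bookkeeping in the transcendental cases: classifying exactly which monic irreducibles $P \in \mathcal{D}[u_1]$ satisfy $P \mid P'$ under each type of derivation, and simultaneously showing $V_0$ has no poles at them nor unwanted high-degree terms. A secondary subtlety is tracking constants carefully, using the hypothesis that $\mathcal{D}$ and $\mathcal{F}$ share the same field of constants $K$ to ensure that no new constants are introduced during the Galois averaging or the partial-fraction matching.
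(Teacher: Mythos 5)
The first thing to note is that the paper contains no proof of this theorem to compare against: it is quoted as a classical result of Liouville with a citation to Risch \cite{Ris}, and the paper immediately moves on to its own material. Your sketch is, in essence, the standard proof from that literature (Risch's argument, in the form later popularized by Rosenlicht): induction on the length of the elementary tower, a trace/norm averaging over $\mathrm{Gal}(L/\mathcal{D})$ in the algebraic case, and partial-fraction analysis in $\mathcal{D}[u_1]$ in the two transcendental cases. You have correctly identified the key dichotomy driving the transcendental cases: for a monic irreducible $P$ of positive degree, $P \mid P'$ is impossible when $u_1$ is a logarithm (since $u_1' \in \mathcal{D}$ gives $\deg_{u_1} P' < \deg_{u_1} P$, and $P' \neq 0$ because the constants of $\mathcal{D}(u_1)$ are still $K$), while in the exponential case it forces $P = u_1$, whose ``logarithm'' is $v \in \mathcal{D}$ and is absorbed into $v_0$. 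The conclusions you draw about $V_0$ (at most linear in $u_1$ with leading coefficient in $K$ in the logarithmic case; lying in $\mathcal{D}$ in the exponential case, after allowing Laurent terms in $u_1$) are exactly right, and the bookkeeping you flag as the main obstacle is indeed where all the work sits, but it is standard and goes through.

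One small repair is needed in your algebraic case. You average the undifferentiated relation $g = V_0 + \sum_i C_i \log V_i$ and argue that the averaged expression differs from $g$ by a constant in $K$; but that difference lives in a compositum containing $L$ and the new logarithms $\log N_{L/\mathcal{D}}(V_i)$, and nothing in the hypotheses guarantees that the constant field of \emph{that} field is still $K$. The clean fix is to differentiate first: the identity $f = V_0' + \sum_i C_i V_i'/V_i$ holds in $\mathcal{D}(u_1) \subset L$, each $\sigma \in \mathrm{Gal}(L/\mathcal{D})$ commutes with the derivation (by uniqueness of its extension to $L$ in characteristic zero, as you note), and averaging the $m = [L:\mathcal{D}]$ conjugated identities gives directly
\[
f = \Bigl(\tfrac{1}{m}\,\mathrm{Tr}(V_0)\Bigr)' + \sum_i \frac{C_i}{m}\,\frac{\bigl(N(V_i)\bigr)'}{N(V_i)},
\]
with trace and norms in $\mathcal{D}$. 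This is the desired conclusion in the paper's notational convention ($g = \int f$, with the resulting $K$-constant of integration absorbed into $v_0$, legitimate since $K \subset \mathcal{D}$), and it avoids any appeal to constants of fields outside the given tower. With that adjustment your outline is a complete and correct reconstruction of the proof in the cited source.
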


We will write $f=g$ as equivalent to $g'=f$.\\

\textbf{Example  and notation:} Let $K$ be an arbitrary field of characteristic zero and  $z$ be transcendental over $K$. We introduce differentiation in the polynomial ring $K[z]$ by taking $z'=1$ and $a'=0$ for all $a \in K$ (the standard differentiation of polynomials in one variable). The field $K(z)$ of rational fractions of $K[z]$ is a differential field when we extend $(z^{-1})'=-z^{-1}z^{-1}$, and $K$ is its field of constants.\\

With $K$ as in the example, we will consider the ring $K[[1/z]$ 
of the following formal series:
  $\sum_{n=0}^{\infty} a_n z^{-n}$ with $a_n \in K$. The differentiation ' can be extended term-by-term as a map of $K[[1/z]]$ to itself. One can also define a valuation $o: K[[1/z]]\mapsto \mathbb{N}\cup \{\infty\}$ as follows (cf. \cite{VS}, discussion before Proposition 2.3.16):                                                                            $o(f)=\min\{n: a_n \neq 0\}$ and $o(0)=\infty$. \\
Consider a  a square-free polynomial $Q(z)=z(z-a_1)...(z-a_q)$  with $a_n \in K$.  Our result can be now formulated as follows:\\

  \begin{thm}\label{thm:main} (a) In an elementary field $\mathcal{F}$ over $K(z)$, consider the elements $g=\int f \in \mathcal{F}$ with $f=1/Q$, where $Q(z)=z(z-a_1)...(z-a_q)$ is a square-free polynomial with $a_1,...,a_q \in K, \quad q \geq 1$. The set $G$ of all such elements is  in a bijective correspondence with a subset of $K[[1/z]]$.  \\
(b)  For (the image of) a $g =\int 1/Q$ we have $o(g)=q$, where $q=\deg Q+1$. 
 \end{thm}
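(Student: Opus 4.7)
The plan is to combine Liouville's theorem (Theorem 1) with an explicit series expansion of $f$ and its antiderivatives in $K[[1/z]]$. Since $Q(z) = z(z-a_1)\cdots(z-a_q)$ is square-free with all roots in $K$, partial fractions give $f = 1/Q = \sum_{i=0}^{q} c_i/(z-a_i)$ with $c_i = 1/Q'(a_i) \in K$ (writing $a_0 := 0$). Each summand expands as a geometric series in $1/z$, yielding $f = \sum_{n \geq 0} A_n z^{-n-1} \in K[[1/z]]$ with $A_n := \sum_{i=0}^{q} c_i a_i^n$. By Theorem 1, any $g \in G$ has the Liouvillian form $g = C + \sum_i c_i \log(z - a_i)$, since $f$ has only simple poles in $K$ and no rational $v_0$-part is required.

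Formally expanding $\log(z - a_i) = \log z - \sum_{n \geq 1} a_i^n / (n z^n)$ then gives
$$g \;=\; C \;+\; \Big(\sum_{i} c_i\Big)\log z \;-\; \sum_{n \geq 1} \frac{A_n}{n}\, z^{-n}.$$
For this to lie in $K[[1/z]]$ the $\log z$ coefficient must vanish, which is precisely $A_0 = \sum_i c_i = 0$; this is forced by $\deg Q \geq 2$ (multiply $1/Q$ by $z$ and let $z \to \infty$). More generally, matching the formal expansion of $1/Q$ against its true leading behavior $z^{-(q+1)}(1 + O(1/z))$ forces the moment identities $A_n = 0$ for $0 \leq n \leq q-1$ and $A_q = 1$. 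This is the generalized Vandermonde identity advertised in the abstract, equivalent to Cramer's rule applied to the Vandermonde matrix $(a_i^n)$ encoding the partial-fraction system.

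With the expansion $g = C - \sum_{n \geq 1} (A_n/n)\, z^{-n}$ in hand, both parts follow. For (a), the assignment $g \mapsto $ (its series) is well-defined and injective: two antiderivatives in $\mathcal{F}$ differ by a constant from $K$, which is recorded faithfully as the $z^0$-coefficient $C$, and distinct $g$'s yield distinct series. For (b), the vanishing $A_n = 0$ for $n < q$ forces the nonconstant part of $g$ to start at $z^{-q}$, with leading coefficient $-A_q/q = -1/q \neq 0$, giving $o(g) = q$ once the constant of integration is normalized to $C = 0$. The main obstacle to overcome is the moment identity $\sum_i a_i^n/Q'(a_i) = 0$ for $0 \leq n \leq q-1$; while it drops out painlessly from comparing orders at infinity, a derivation via a Vandermonde determinant is more robust for the parametric generalizations the author is aiming at in the rest of the paper.
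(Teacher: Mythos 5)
Your proof is correct, and its skeleton matches the paper's: partial fractions with $c_i=1/Q'(a_i)$, Liouville's theorem to get $g=C+\sum_i c_i\log(z-a_i)$, formal expansion of the logarithms in $K[[1/z]]$, cancellation of the $\log z$ term, and reading off $o(g)=q$ from the vanishing moments. The genuine difference is how you get the key identities $A_n=\sum_i c_i a_i^n=0$ for $0\le n\le q-1$ and $A_q=1$: the paper makes these its central Lemma and proves them by Laplace expansions of (generalized) Vandermonde determinants, using $V_{q,l}=V_q\cdot S_l$ from Macdonald. You instead derive them by comparing the partial-fraction series of $1/Q$ in $K[[1/z]]$ with the direct expansion $1/Q=z^{-(q+1)}\prod_i(1-a_i/z)^{-1}$, i.e., from the valuation $o(1/Q)=q+1$. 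This is shorter and purely formal --- just phrase ``multiply by $z$ and let $z\to\infty$'' as comparing coefficients in $K[[1/z]]$, which is valid over any $K$ of characteristic zero --- and it even yields the higher identities $A_{q+l}=S_l(a_1,\dots,a_q)$ for free from the generating function $\prod_i(1-a_i/z)^{-1}=\sum_{l\ge0}S_l z^{-l}$, recovering the Jacobi/Cornelius identity the paper cites. What the determinant route buys is the identities as standalone algebraic facts, which is the paper's advertised contribution; your route makes them a corollary of the embedding into $K[[1/z]]$, which is arguably the more natural logic for this theorem. Two smaller contrasts: for well-definedness and injectivity in (a), the paper invokes uniqueness of symmetric formal series ({\L}ojasiewicz--Stasica, Theorem 9.3), while you argue directly through the constant of integration, which suffices once the log-series normalization is fixed (your observation that the $\log z$ coefficient vanishes precisely because $A_0=0$ disposes of the only real obstruction). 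Finally, your leading coefficient $-1/q$ (after normalizing $C=0$, a caveat genuinely needed for (b), since otherwise $o(g)=0$) is the correct sign, consistent with the paper's own application $\int 1/Q\to-1/(qz^q)$; the paper's proof states $b_q=1/q$ and $b_{q+l}=S_l/(q+1)$, which appear to be slips for $-1/q$ and $-S_l/(q+l)$.
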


In the proof  of this theorem we will apply the following identities:\\
\begin{lemma}
\[
\frac{1}{Q'(0)}+\frac{1}{Q'(a_1)}+...+\frac{1}{Q'(a_q)}=0,
\]
\[
\frac{a_1}{Q'(a_1)}+...+\frac{a_q}{Q'(a_q)}=0,
\]
...\\
\[
\frac{a_1^{q-1}}{Q'(a_1)}+...+\frac{a_q^{q-1}}{Q'(a_q)}=0,
\]
\[
\frac{a_1^{q}}{Q'(a_1)}+...+\frac{a_q^{q}}{Q'(a_q)}=1,
\]
\[
\frac{a_1^{q+l}}{Q'(a_1)}+...+\frac{a_q^{q+l}}{Q'(a_q)}=S_l(a_1,...,a_q),
\]
where $S_l$ is the complete homogeneous polynomial of
degree $l$, symmetric in its variables, i.e., $S_l(X_1,...,X_n)=\sum_{1\leq i_1\leq...\leq i_l\leq n }X_{i_1}...X_{i_l}$ for $l=1,2,...$.
\end{lemma}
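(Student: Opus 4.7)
The strategy is to extract all identities at once by writing $1/Q(z)$ as an element of $K[[1/z]]$ in two different ways and matching coefficients.

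First, I would use the partial fraction decomposition
\[
\frac{1}{Q(z)} = \frac{1}{Q'(0)\,z} + \sum_{i=1}^{q}\frac{1}{Q'(a_i)(z-a_i)},
\]
which is valid because $Q$ has simple roots $0,a_1,\ldots,a_q$ in $K$. Expanding each $1/(z-a_i)=\sum_{k\ge 0} a_i^k z^{-k-1}$ as a geometric series in $K[[1/z]]$, the coefficient of $z^{-(k+1)}$ becomes
\[
\frac{\delta_{k,0}}{Q'(0)}+\sum_{i=1}^{q}\frac{a_i^{k}}{Q'(a_i)}.
\]

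Second, I would compute the same series directly. Writing $Q(z)=z^{q+1}\prod_{i=1}^{q}(1-a_i/z)$ and invoking the generating identity
\[
\prod_{i=1}^{q}\frac{1}{1-a_i t}=\sum_{l=0}^{\infty} S_l(a_1,\ldots,a_q)\, t^{l}
\]
for the complete homogeneous symmetric polynomials (easily justified by induction on $q$, or by multiplying out geometric series and collecting terms of like degree), and substituting $t=1/z$, I obtain
\[
\frac{1}{Q(z)}=\sum_{l=0}^{\infty} S_l(a_1,\ldots,a_q)\, z^{-(q+1+l)}.
\]

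Matching coefficients of $z^{-(k+1)}$ in the two expansions yields the entire list simultaneously: the case $k=0$ gives the first identity including the $1/Q'(0)$ term; the cases $1\le k\le q-1$ give vanishing of $\sum_i a_i^{k}/Q'(a_i)$; the case $k=q$ gives $S_0=1$; and the cases $k=q+l$ with $l\ge 1$ produce $S_l(a_1,\ldots,a_q)$. The only mildly nontrivial ingredient is the generating function for the $S_l$; everything else is routine coefficient comparison in $K[[1/z]]$. An alternative route, perhaps closer to the ``generalized Vandermonde determinant expansion'' mentioned in the abstract, is to recognize each sum $\sum_{i} a_i^{k}/\prod_{j\ne i}(a_i-a_j)$ as a divided difference of $z^{k}$ on the nodes $0,a_1,\ldots,a_q$ and evaluate it via the Jacobi bialternant (Schur) formula; the two methods produce the same identities.
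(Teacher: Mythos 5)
Your argument is correct, and it takes a genuinely different route from the paper's. The paper proves the identities determinantally: it puts each sum over the common denominator $V_{q+1}(0,a_1,\ldots,a_q)$ (or $V_q(a_1,\ldots,a_q)$), recognizes the resulting numerator as the Laplace expansion of a Vandermonde-type determinant with a repeated column (hence zero) for the vanishing identities, and for the family $\sum_i a_i^{q+l}/Q'(a_i)$ invokes the generalized Vandermonde factorization $V_{q,l}=V_q\cdot S_l$ from Macdonald --- this is the ``generalized Vandermonde determinant expansion'' advertised in the abstract, and it is what ties the last identity to the Cornelius/Jacobi result cited in the remark. You instead expand $1/Q$ in $K[[1/z]]$ in two ways --- via partial fractions and geometric series, and via $Q(z)=z^{q+1}\prod_{i=1}^q(1-a_i/z)$ together with the generating identity $\prod_{i}(1-a_it)^{-1}=\sum_{l\ge 0}S_l\,t^l$ --- and match coefficients, which yields all the identities uniformly in a single stroke; your $\delta_{k,0}$ correctly accounts for the $1/Q'(0)$ term appearing only in the first identity, since $0^k=0$ for $k\ge 1$. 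The one point you should make explicit is that the two computations represent the same element of $K[[1/z]]$, i.e., that the identification of proper rational fractions with formal series is the restriction of the field embedding $K(z)\hookrightarrow K((1/z))$, so the additive (partial-fraction) and multiplicative (product of geometric series) manipulations may be freely combined; this is routine and consistent with the identifications the paper itself makes when proving Theorem~\ref{thm:main}. In exchange for losing the explicit determinantal interpretation of each sum (which has independent interest, and which your alternative divided-difference/bialternant remark would recover), your route is shorter, needs nothing beyond geometric series and the standard generating function for the complete homogeneous symmetric polynomials, and meshes naturally with the paper's formal-series framework: it is in effect the expansion of $g'=1/Q$, where the paper's proof of the theorem later expands $g$ itself.
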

\begin{proof} (of Lemma) We use properties of the Vandermonde
determinant:\\
\[
V_n(x_1,...,x_n)=\vmatrix
1 & x_1 &\hdots &x_1^{n-1}\\
1 & x_2 &\hdots &x_2^{n-1}\\
\hdotsfor4\\
1 & x_n &\hdots &x_n^{n-1}\\
\endvmatrix
\]
Recall that $V_n(x_1,...,x_n)= \prod_{1 \leq i<j \leq n}(x_j-x_i)$
and
$V_{n+1}(x_1,...,x_{n+1})=(-1)^n\prod_{i=1}^n(x_i-x_{n+1})V_n(x_1,...,x_n)$.
More generally, one can consider
\[ V_{n,l}(x_1,...,x_n)=\vmatrix
1 & x_1 &\hdots &x_1^{n-1+l}\\
1 & x_2 &\hdots &x_2^{n-1+l}\\
\hdotsfor4\\
1 & x_n &\hdots &x_n^{n-1+l},\\
\endvmatrix
\] where $l=1,2,...$. Then $V_{n,l}(x_1,...,x_n)=V_n(x_1,...,x_n)\cdot
S_l(x_1,...,x_n)$, where $S_l$ is the complete homogeneous polynomial of
degree $l$ in $x_1,...,x_n$ (\cite{Ma}, formula I.3.1).\\
Note that 
for  $Q(z)=z(z-a_1)...(z-a_q)$ one has
$Q'(0)=(-1)^q\prod_{i=1}^q a_1...a_q$,
$Q'(a_i)=\prod_{j \neq i}(a_i-a_j)$.
To prove the first stated identity, let us make
$V_{q+1}(0,a_1,...,a_q)$ the common denominator of the
left hand side. Then
$1/Q'(0)=(-1)^qV_q(a_1,...,a_q)/V_{q+1}(0,a_1,...,a_q)$ and
$1/Q'(a_i)=(-1)^{n-i-1}\prod_{k \neq i}a_k \prod_{k,j
\neq 1, k<j}(a_k-a_j)$, so the numerator is the cofactor
of the element 1 in the $i$-th row of $V_q$. Thus in the summation
of all terms corresponding to different roots of $Q$ the
numerator is $V_q$ minus its Laplace expansion along the column of
1's, which equals 0. The same argument proves the second identity:
in the common denominator we now have $V_q(a_1,...,a_q)$
and the numerator is a Vandermonde determinant of size
$(q-1)\times (q-1)$ minus its Laplace expansion along the column
of 1's . In the sum of $\frac{a_i^k}{Q'(a_i)}, \quad k
\leq q-1$, the numerator is the Laplace
expansion of \[\vmatrix 1& a_1&...a_1^{q-1}& 1\\
                       \hdotsfor4\\
                       1& a_n&...a_q^{q-1}& 1\\
                       \endvmatrix\]

along the column containing terms of the type $a^k$, and in
the sum of $\frac{a_i^{q+l}}{Q'(a_i)}$ the numerator
is the Laplace expansion of $V_{q,l}(a_1,...a_q)$ with respect to the last
column, which is a product of $V_q(a_1,...a_q)$ by the complete homogeneous symmetric
polynomial $S_l(a_1,...a_q)$ of degree $l>0$.

\end{proof}

\begin{rk}
The last identity was obtained in a different way as Theorem 3.2 in \cite{Co}, where it is also traced back to C.G.J. Jacobi.
\end{rk}
\begin{proof} (of Theorem \ref{thm:main})

First note that for $a \in K$ we can identify $(z-a)^{-1}$ with $\sum_{n=1}^{\infty}a^n z^{-(n+1)}$. More generally, if $Q(z)=z(z-a_1)...(z-a_q)$ is a square-free polynomial with $a_n \in K$ and $P \in K[z]$, then partial fraction decomposition gives $P/Q= c_0/z +c_1/(z-a_1)+...+c_q/(z-a_q)$ with $c_n=P(a_n)/Q'(a_n), \quad n=1,...,q$ (cf. \cite{Tr}) and $P/Q$ can also be identified with an element of $K[[1/z]]$. It follows that $\log (1-a/z)$ can be identified with $\sum_{n=1}^{\infty}((-1)^na^n/n)z^{-n}$. Let now $g=\int (1/Q)$ with $Q$ as above. Then $g=b_0+\frac{1}{Q'(0)}\log z +\frac{1}{Q'(a_1)}\log (z-a_1)+...+\frac{1}{Q'(a_q)}\log (z-a_q)$ in $\mathcal{F}$ with $b_0$ in $K$. Identifying each $\log (z-a_j), \quad j=1,...,q$ with an appropriate formal series in $K[[1/z]]$ as above and adding the results, we get $g=\sum_{n=0}^{\infty}b_nz^{-n}$. By the lemma, $b_1=...=b_{q-1}=0$, $b_q=1/q$ and $b_{q+l}=S_l(a_1,...,a_q)/(q+1)$, where $S_l$ is the complete homogeneous symmetric polynomial of degree $l$, $l=1,2,...$. To ensure this is the only possible series in $K[[1/z]]$ that can be identified with $g=\int (1/Q)$, note that any such series should be symmetric with respect to $a_1,...,a_q$. Theorem 9.3 in \cite{LS} says the following: If a formal series $F$ in the variables $X_1,...,X_q,Y$ is symmetric with respect to $(X_1,...,X_q)$, then $F=\Phi(\sigma_1,...,\sigma_q,Y)$, where $\Phi$ is a formal series in the variables $X_1,...,X_q,Y$ and  $\sigma_1,...,\sigma_q$ are elementary symmetric polynomials in $X_1,...,X_q$. Moreover, the series $\Phi$ is unique. Uniqueness of our $\sum_{n=0}^{\infty}b_nz^{-n}$ follows, because $S_l(X_1,...,X_q)$ can be expressed as polynomials in $\sigma_1,...,\sigma_q$. Hence also $o(g)=q$.
\end{proof}

\section{Applications}\label{s:apps}

A particular case of our Theorem \ref{thm:main} is Proposition 1 in \cite{GS}, which was proved for $K=\mathbb{C}$ and $a_j=\zeta^j a_1, j=1,...,q$, where $\zeta$ is a primitive root of unity of order $q$. The convergence of $\int 1/(z(z-a_1)...(z-a_q)) \to -1/(qz^q)$ as $a_1,...,a_q \to 0$ (which is in fact uniform for $|z| >R$) is important in constructing approximate Fatou coordinates
for analytic maps $f$ in a neighborhood of an 
$f_0(z)= z+z^{q+1}+...$ with $q>1$. These are coordinates in which $f$ looks like a translation. The first step in constructing Fatou coordinate for $f_0$ consists in lifting $f_0$ to a neighborhood of infinity by the coordinate change $z \mapsto -1/(qz^q)$. We considered $f$ belonging to an  one-parameter family of polynomials
$P_\lambda(z)=\lambda z+z^2$ with $\lambda_0=e^{2\pi ip/q}$ and
$\lambda=e^{2\pi i(p/q+u)}$, with $p,q$ coprime integers and $u$
in a sufficiently small neighborhood of $0$ in $\mathbb{C}$. We started the construction of near-Fatou coordinate by applying  
the transformation $w(z) =\int_{z_0}^z (1/Q(u,\zeta))d\zeta$, where $Q(u,z)$ is the Weierstrass polynomial for $P_{\lambda}^{\circ q}(z)-z$. As $u$ is small, the non-zero solutions of $Q(u,z)=0$ are also small. Because of convergence of integrals, the coordinates obtained for $P_\lambda$ depend continuously on $u$. For more details and references see \cite{GS}.\\

Another application is a generalization of the well-known formula for electrostatic potential of a dipole located at $z=0$ (cf. \cite{Ne}):
consider two charges $1/a$ and $-1/a$ placed respectively at
$z=0$ and $z=a$. Then their combined electrostatic potential is $(1/a)\log z -(1/a)\log(z-a)$, which tends to $-1/z$ as $a \to 0$ (this follows easily from the definition of derivative of the logarithmic function). In the setting of this paper, there are charges $1/Q'(0), 1/Q'(a_1),...,1/Q'(a_q)$ placed at $0,a_1,...,a_q$ (with $Q(z)=z(z-a_1)...(z-a_q)$), and it follows from Theorem \ref{thm:main} that the potential tends to $-1/(qz^q)$ as $a_1,...,a_q \to 0$.\\

\textbf{Acknowledgment:} The main idea of the paper occured while I was working with Estela Gavosto on \cite{GS}. I thank her for many useful conversations. 

\end{document}